%
%
%


\documentclass{conm-p-l}




\usepackage{amssymb}
\usepackage{mathrsfs}
\usepackage{tikz}
\usepackage{soul}
\usepackage{xcolor}
\usepackage{physics}


\newtheorem{theorem}{Theorem}[section]
\newtheorem{lemma}[theorem]{Lemma}

\theoremstyle{definition}
\newtheorem{definition}[theorem]{Definition}
\newtheorem{example}[theorem]{Example}

\newtheorem*{problem}{Problem}

\theoremstyle{remark}
\newtheorem{remark}[theorem]{Remark}

\numberwithin{equation}{section}

\begin{document}

\title{The Racah algebra: An overview and recent results}


\author[H. De Bie]{Hendrik De Bie}
\address{Department of Electronics and Information Systems, Faculty of Engineering and Architecture, Ghent University, Krijgslaan 281, 9000 Ghent, Belgium}
\email{hendrik.debie@ugent.be}
\thanks{}

\author[P. Iliev]{Plamen Iliev}
\address{School of Mathematics, Georgia Institute of Technology, Atlanta, GA, 30332-0160, USA}
\email{iliev@math.gatech.edu}
\thanks{}

\author[W. van de Vijver]{Wouter van de Vijver}
\address{Department of Electronics and Information Systems, Faculty of Engineering and Architecture, Ghent University, Krijgslaan 281, 9000 Ghent, Belgium}
\email{Wouter.vandeVijver@UGent.be}

\author[L. Vinet]{Luc Vinet}
\address{Centre de Recherches Math\'ematiques, Universit\'e de Montr\'eal, P.O. Box 6128, Centre-ville Station, Montr\'eal, QC H3C 3J7, Canada}
\email{vinet@crm.umontreal.ca}

\subjclass[2000]{Primary }

\date{\today}

\begin{abstract}
Recent results on the  Racah algebra $\mathcal{R}_n$ of rank $n - 2$ are reviewed. $\mathcal{R}_n$  is defined in terms of generators and relations and sits in the centralizer of the diagonal action of $\mathfrak{su}(1,1)$ in $\mathcal{U}(\mathfrak{su}(1,1))^{\otimes n}$. Its connections with multivariate Racah polynomials are discussed. It is shown to be the symmetry algebra of the generic superintegrable model on the $ (n-1)$ - sphere and a number of interesting realizations are provided.
\end{abstract}

\maketitle

\section{Introduction}

It is understood since the seminal work of Zhedanov \cite{Zhedanov-1991} that the bispectral properties of the families of orthogonal polynomials of the Askey scheme can be encoded into algebras that are generally quadratic. Basically, the generators of these algebras, which bear the names of the different families, are realized by differential or difference operators. These operators have the eponym polynomials as eigenfunctions by acting on the variable or the degree of these polynomials. The operators acting on the variable can be differential or difference depending on the family of polynomials. The difference operators acting on the degree coincide with recurrence operators. When multiplied these operators are taken to be realized in the same representation; that is either acting on the variable or the degree. This is how the Racah algebra (of rank 1) was originally defined.


The Racah polynomials are also known to enter in the overlaps between bases associated to the recoupling of three $\mathfrak{su}(1,1)$ (or $\mathfrak{su}(2)$) irreducible representations; these coefficients are referred to as $6j$-symbols. Since these bases are defined by diagonalizing intermediate Casimir elements in $\mathcal{U}(\mathfrak{su}(1,1))^{\otimes 3}$, this naturally led to the observation that $\mathcal{R}_3$ also arises in  the centralizer of the diagonal action of $\mathfrak{su}(1,1)$ in $\mathcal{U}(\mathfrak{su}(1,1))^{\otimes 3}$ with the generators here represented by these intermediate Casimir operators \cite{GZ}. This result naturally paves the way for the construction of higher rank generalizations of $\mathcal{R}_3$ by considering instead of $3$ an arbitrary number of 
$\mathfrak{su}(1,1)$ factors. These are the algebras on which we will focus in this review.

The Racah algebra $\mathcal{R}_3$ was further seen to occur in a number of interesting situations. For instance, by taking the $\mathfrak{su}(1,1)$ copies as realizations of
 the dynamical algebra of the singular oscillator, one observes that the total Casimir operator is essentially the Hamiltonian of the generic superintegrable system on the $2$-sphere \cite{Kalnins&Miller&Post-2007}. The picture of $\mathcal{R}_3$ as a commutant that we described in the preceding paragraph readily leads to the conclusion \cite{Genest&Vinet&Zhedanov-2014-2} that the Racah algebra is the symmetry algebra of this superintegrable Hamiltonian. $\mathcal{R}_3$ was also found to possess an embedding into $\mathcal{U}(\mathfrak{su}(1,1))$ \cite{Gao&Wang&Hou-2013}. When using models for $\mathfrak{su}(1,1)$ in terms of differential operators, this last embedding gives a one-variable realization of $\mathcal{R}_3$ while the embedding in
$\mathcal{U}(\mathfrak{su}(1,1))^{\otimes 3}$ provides a three-variable one. The connection between these two models was obtained  in \cite{Genest&Vinet&Zhedanov-2014} through separation of variables.

Roughly five years ago, one of us co-authored a paper \cite{Genest&Vinet&Zhedanov-2014-3} to introduce the Racah algebra and review many of the facets we mentioned. Since then the subject has much evolved especially regarding the higher rank generalizations. The present paper offers a timely overview of these advances. Interestingly the review \cite{Genest&Vinet&Zhedanov-2014-3} contained a picture very much like the one below that we feel appropriate to reproduce here. 

\begin{center}
\begin{tikzpicture}
	\node (Algebra) at (0,0) {The Racah algebra};
	\node (OP) at (210:4.5cm) {Multivariate Racah polynomials};
	\node (Problem) at (330:4.5cm) {The Racah problem for $\mathfrak{su}(1,1)$};
	\node (model) at (90:3cm) {Generic S.I. model on $\mathbb{S}^{n-1}$ };
	\draw[<->] (OP) to [bend left=10] (model);
	\draw[<->] (model) to [bend left=10] (Problem);
	\draw[<->] (Problem) to [bend left=10] (OP);
	\draw[<->] (Algebra) to (OP);
	\draw[<->] (Algebra) to (Problem);
	\draw[<->] (Algebra) to (model);
\end{tikzpicture}
\end{center}

It depicts the Racah algebra as a central entity connected to orthogonal polynomials, recoupling problems and superintegrable systems. It  further indicates that the interrelations between these topics
can thus be understood on the basis of the common underlying algebraic structure.

This diagram also offers a nice snapshot  of the outline of the present article which will unfold as follows. The Racah algebra $\mathcal{R}_n$ of rank $(n-2)$ is defined in Section \ref{Rn} in the framework of a generalized Racah problem, as the subalgebra of $\mathcal{U}(\mathfrak{su}(1,1))^{\otimes n}$ generated by the intermediate Casimir elements in this tensor product. All commutation relations will be given. Section \ref{S3} describes the connections between 
$\mathcal{R}_n$ and multivariate Racah polynomials. These will appear as overlaps between two representation bases diagonalized by different labelling Abelian subalgebras. The case when the polynomials defined by Tratnik occur \cite{Tratnik-1991} will be pointed out. With an eye to applications, Section \ref{S4} will present four different realizations of $\mathcal{R}_n$. First, upon realizing each $\mathfrak{su}(1,1)$ as the conformal algebra in one dimension, it will be explained how $\mathcal{R}_n$ arises as the symmetry algebra of the generic superintegrable model on the $(n-1)$ - sphere. 
Second a realization of $\mathcal{R}_n$ in terms of Dunkl operators \cite{DTAMS} will be provided. Third, contrasting the realizations of $\mathcal{R}_n$ in terms of $n$ variables in the superintegrable model context or in terms of Dunkl operators, it will be shown how differential operators in $(n-2)$ variables that satisfy the commutation relations of $\mathcal{R}_n$ can be constructed by calling upon the Barut-Girardello realization of $\mathfrak{su}(1,1)$. Fourth, the loop will be closed if one bears in mind how the Racah algebra was initially identified. Indeed Section \ref{S4} will end by extracting directly from properties of the multivariate polynomials a realization of  $\mathcal{R}_n$ in terms of the difference operators of which the multivariate Racah polynomials are eigenfunctions \cite{Geronimo&Iliev-2010}. Section \ref{Conclusion} will offer concluding remarks.

\section{The higher rank Racah algebra}\label{Rn}
Let $\mathfrak{su}(1,1)$ be the Lie algebra generated by the operators $J_{+}$, $J_{-}$ and $J_0$ obeying the following relations
\[
 [J_0,J_\pm]=\pm J_\pm, \quad [J_-,J_+]=2J_0.
\]
Its Casimir is given by 
\[
	C:=J_0^2-J_0-J_+J_-.
\]
This operator sits inside the universal enveloping algebra $\mathcal{U}(\mathfrak{su}(1,1))$. We consider now the $n$-fold tensor product $\mathcal{U}(\mathfrak{su}(1,1))^{\otimes n}$. In this algebra we define the following operators with $\epsilon=\pm $ or $0$:
\begin{align*}
J_{\epsilon,k}=\underbrace{1 \otimes \dots \otimes 1}_{k-1 \text{ times}} \otimes J_{\epsilon} \otimes \underbrace{1 \otimes \dots \otimes 1}_{n-k \text{ times}} 
\end{align*}
Let $K$ be a subset of $[n]:=\{1, \dots, n \}$. We define:
\[
 J_{\epsilon,K}=\sum_{k \in K} J_{\epsilon, k}.
\]
The following lemma is easy to check:
\begin{lemma}
Let $K \subset [n]$. The operators $J_{0,K}$, $J_{+,K}$ and $J_{-,K}$ generate an algebra isomorphic to $\mathfrak{su}(1,1)$. We denote this algebra by $\mathfrak{su}^K(1,1)$.
\end{lemma}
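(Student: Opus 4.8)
The plan is to verify directly that the three collective operators $J_{0,K}$, $J_{+,K}$, $J_{-,K}$ satisfy the defining relations of $\mathfrak{su}(1,1)$, and then to promote the resulting Lie algebra homomorphism to an isomorphism onto its image. The essential structural fact I would exploit is that $J_{\epsilon,j}$ and $J_{\delta,k}$ act on disjoint tensor factors whenever $j\neq k$, so that
\[
 [J_{\epsilon,j},J_{\delta,k}]=0 \qquad (j\neq k),
\]
while for $j=k$ the two operators differ from $J_\epsilon$, $J_\delta$ only by the identity factors occupying the remaining slots, whence $[J_{\epsilon,k},J_{\delta,k}]$ simply reproduces the single-copy $\mathfrak{su}(1,1)$ bracket embedded in the $k$-th slot.

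First I would expand each bracket over the defining sums. For instance, bilinearity of the commutator gives
\[
 [J_{0,K},J_{\pm,K}]=\sum_{j\in K}\sum_{k\in K}[J_{0,j},J_{\pm,k}].
\]
By the observation above every off-diagonal term $(j\neq k)$ vanishes, so only the diagonal survives, leaving $\sum_{k\in K}[J_{0,k},J_{\pm,k}]=\sum_{k\in K}(\pm J_{\pm,k})=\pm J_{\pm,K}$. The same collapse of the cross terms applied to $[J_{-,K},J_{+,K}]$ yields $\sum_{k\in K}2J_{0,k}=2J_{0,K}$. Hence the three operators satisfy exactly
\[
 [J_{0,K},J_{\pm,K}]=\pm J_{\pm,K}, \qquad [J_{-,K},J_{+,K}]=2J_{0,K},
\]
so the assignment $J_0\mapsto J_{0,K}$, $J_\pm\mapsto J_{\pm,K}$ extends to a Lie algebra homomorphism from $\mathfrak{su}(1,1)$ into $\mathcal{U}(\mathfrak{su}(1,1))^{\otimes n}$ regarded as a Lie algebra under the commutator.

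It remains to upgrade this homomorphism to an isomorphism, which is the only step requiring a moment's care since the statement asserts an isomorphism rather than merely a homomorphism. The cleanest route is to invoke the simplicity of $\mathfrak{su}(1,1)$ (its complexification being $\mathfrak{sl}_2$): the kernel is an ideal, and since the map is visibly nonzero for $K\neq\emptyset$, the kernel must be trivial and the map injective. Alternatively, and more elementarily, one checks that $J_{0,K}$, $J_{+,K}$, $J_{-,K}$ are linearly independent in the enveloping algebra, because a relation $a\,J_{0,K}+b\,J_{+,K}+c\,J_{-,K}=0$ restricts in each slot to $a\,J_0+b\,J_+ + c\,J_-=0$, forcing $a=b=c=0$ by the independence of $J_0,J_+,J_-$. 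Either argument identifies the image as a faithful three-dimensional copy of $\mathfrak{su}(1,1)$, completing the proof. I do not anticipate any genuine obstacle; the lemma is indeed routine, with the faithfulness argument being the only point worth spelling out.
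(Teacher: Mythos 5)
Your proof is correct and takes essentially the same approach the paper intends: the paper states the lemma without proof (``easy to check''), and the routine verification it has in mind is exactly your computation that all cross terms $[J_{\epsilon,j},J_{\delta,k}]$ with $j\neq k$ vanish because the operators act on disjoint tensor factors, leaving the diagonal terms that reproduce the single-copy $\mathfrak{su}(1,1)$ brackets. Your additional faithfulness argument (via simplicity of $\mathfrak{su}(1,1)$, or via linear independence of $J_{0,K},J_{+,K},J_{-,K}$ in the enveloping algebra) soundly fills in the one point the paper leaves implicit, namely that the homomorphism is an isomorphism onto its image.
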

The algebra $\mathfrak{su}^K(1,1)$ lives in the components of $\mathcal{U}(\mathfrak{su}(1,1))^{\otimes n}$ whose index is in $K$. Consider its Casimir:
\[
 C_{K}:=J_{0,K}^2-J_{0,K}-J_{+,K}J_{-,K}.
\]
The Casimirs of all possible $\mathfrak{su}^K(1,1)$ generate the higher rank Racah algebra.
\begin{definition}
The higher rank Racah algebra $\mathcal{R}_n$ of rank $n-2$ is the subalgebra of $\mathcal{U}(\mathfrak{su}(1,1))^{\otimes n}$ generated by the following set of operators:
\[
  \{ C_K \,|\, K \subset [n] \text{ and } K\neq \emptyset \}.
\]
\end{definition}
We exclude the empty set as $C_{\emptyset}=0$. 
\begin{remark}
Alternatively, one can construct the generators $C_A$ from the comultiplication $\mu^*$ of $\mathfrak{su}(1,1)$. This is an algebra morphism that embeds $\mathfrak{su}(1,1)$ into the tensor product $\mathfrak{su}(1,1) \otimes \mathfrak{su}(1,1)$. It is defined as follows on the generators:
\begin{equation}
\label{comultiplication} 
\mu^*(J_0)=J_0\otimes 1+1\otimes J_0, \qquad \mu^*(J_\pm)=J_\pm\otimes 1+1\otimes J_\pm .
\end{equation}
This map extends to the universal enveloping algebra $\mathcal{U}(\mathfrak{su}(1,1))$. This allows us to apply the comultiplication repeatedly on the Casimir $C$.
$$
\mathscr{C}_1:=C, \qquad  \mathscr{C}_n:=(\underbrace{1\otimes\ldots\otimes 1}_{n-2 \text{ times }} \otimes \mu^*)(\mathscr{C}_{n-1}).
$$
Each Casimir $\mathscr{C}_k$ lives in $\mathcal{U}(\mathfrak{su}(1,1))^{\otimes k}$. We want to lift this Casimir to  $\mathcal{U}(\mathfrak{su}(1,1))^{\otimes n}$. Consider the map
$$
\tau_k: \bigotimes_{i=1}^{m-1} \mathcal{U}(\mathfrak{su}(1,1))\rightarrow \bigotimes_{i=1}^{m} \mathcal{U}(\mathfrak{su}(1,1)),
$$
which acts as follows on homogeneous tensor products:
$$
\tau_k(t_1\otimes \ldots \otimes t_{m-1}):=t_1\otimes \ldots \otimes t_{k-1} \otimes 1 \otimes t_k \otimes \ldots \otimes t_{m-1} .
$$
and extend it by linearity.  The map $\tau_k$ adds a $1$ at the $k$-th place. This allows to define the following:
\begin{align}
\label{Casimir-Upper}
C_A:=\left(\prod_{k \in \left[ n\right] \backslash A}^{\longrightarrow} \tau_k \right)\left(\mathscr{C}_{|A|}\right).
\end{align}
with $A$ a subset of $[n]$. This leads to the same generators for the Racah algebra $\mathcal{R}_n$. This alternative pathway has been successfully applied on other (super) Lie algebras and their $q$-deformations. See for example \cite{DeBie&DeClercq&vandeVijver,DeBie&Genest$vandeVijver&Vinet-2}. For a more general approach see \cite{Lehrer&Zhang}.
\end{remark}

\begin{lemma}\label{sl2}(See also \cite{Lehrer&Zhang})
The higher rank Racah algebra  sits in the centralizer of $\mathfrak{su}^{[n]}(1,1)$. 
\end{lemma}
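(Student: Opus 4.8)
The plan is to reduce everything to the defining generators $C_K$ and to exploit two basic facts: a Casimir commutes with its own algebra, and operators supported on disjoint tensor slots commute. First I would note that the centralizer of $\mathfrak{su}^{[n]}(1,1)$ is itself a subalgebra of $\mathcal{U}(\mathfrak{su}(1,1))^{\otimes n}$, since it is by definition the set of elements commuting with each of $J_{0,[n]}$, $J_{+,[n]}$ and $J_{-,[n]}$, and such a set is closed under products and linear combinations. Hence it suffices to show that every generator $C_K$ of $\mathcal{R}_n$ lies in this centralizer, i.e.\ that $[C_K, J_{\epsilon,[n]}]=0$ for each $\epsilon\in\{0,+,-\}$ and each nonempty $K\subset[n]$.

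The key step is the decomposition
\[
J_{\epsilon,[n]} = J_{\epsilon,K} + J_{\epsilon,[n]\setminus K},
\]
which splits the total generator into a part supported on the slots indexed by $K$ and a part supported on the complementary slots. For the first term I would invoke the preceding Lemma on $\mathfrak{su}^K(1,1)$: the operators $J_{0,K}, J_{+,K}, J_{-,K}$ generate a copy of $\mathfrak{su}(1,1)$ whose Casimir is precisely $C_K$, and a Casimir commutes with every generator of its own algebra, so $[C_K, J_{\epsilon,K}]=0$. For the second term I would observe that $C_K$ is built only from operators $J_{\delta,k}$ with $k\in K$, whereas $J_{\epsilon,[n]\setminus K}$ is built only from operators $J_{\epsilon,k}$ with $k\notin K$; since these act on disjoint tensor factors they commute, giving $[C_K, J_{\epsilon,[n]\setminus K}]=0$. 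Adding the two contributions and using bilinearity of the commutator yields $[C_K, J_{\epsilon,[n]}]=0$.

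Combining these, every generator commutes with all of $\mathfrak{su}^{[n]}(1,1)$, and since the centralizer is a subalgebra the whole of $\mathcal{R}_n$ does as well, which is the claim. The argument is essentially mechanical; the only point requiring a moment's care — and what I would treat as the main obstacle — is making the disjoint-support commutation rigorous. Concretely this rests on the fact that in $\mathcal{U}(\mathfrak{su}(1,1))^{\otimes n}$ any factor of the form $1\otimes\cdots\otimes X\otimes\cdots\otimes 1$ commutes with $1\otimes\cdots\otimes Y\otimes\cdots\otimes 1$ whenever $X$ and $Y$ occupy different slots, together with the observation that $C_K$ genuinely involves no factor outside the slots of $K$. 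Once these are noted, the decomposition above does all the work.
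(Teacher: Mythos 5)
Your proposal is correct and follows exactly the paper's own argument: the decomposition $J_{\epsilon,[n]}=J_{\epsilon,K}+J_{\epsilon,[n]\setminus K}$, with the first commutator vanishing because $C_K$ is the Casimir of $\mathfrak{su}^K(1,1)$ and the second because the operators act on disjoint tensor components. Your additional remarks (the centralizer being a subalgebra, so checking generators suffices) merely make explicit what the paper leaves implicit.
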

\begin{proof}We have
\[
[C_A,J_{\epsilon,[n]}]=[C_A,J_{\epsilon,A}]+[C_A,J_{\epsilon,[n]\backslash A}]=0.
\]
The first commutator equals $0$ as $C_A$ is the Casimir of $\mathfrak{su}^A(1,1)$ and the second commutator is $0$ as both operators act on different components of $\mathcal{U}(\mathfrak{su}(1,1))^{\otimes n}$.
\end{proof}

\begin{example}\label{R3}
We call the simplest non-trivial example $\mathcal{R}_3$  the rank one case. It is generated by the set
\[
 \{ C_1, C_2, C_3, C_{12}, C_{23}, C_{13},C_{123} \}.
\]
For ease of notation we abbreviate sets of the form $\{1,2\}$ by $12$ when in the index of a generator. The elements $C_1$, $C_2$, $C_3$ and $C_{123}$ are central elements of $\mathcal{R}_n$. A tedious calculation shows that the generators are not linearly independent
\begin{equation}\label{LinDep}
C_{123}=C_{12}+C_{13}+C_{23}-C_1-C_2-C_3 .
\end{equation}
As $C_{123}$ is central and by formula \eqref{LinDep} we have
\begin{align*}
0=[C_{123},C_{12}]=[C_{13},C_{12}]+[C_{23},C_{12}], \\
0=[C_{123},C_{13}]=[C_{12},C_{13}]+[C_{23},C_{13}], \\
0=[C_{123},C_{23}]=[C_{12},C_{23}]+[C_{13},C_{23}]. 
\end{align*}
We conclude that $[C_{12},C_{23}]=[C_{23},C_{13}]=[C_{13},C_{12}]$. We introduce the following operator
\[
F:=\frac{1}{2}[C_{12},C_{23}] =\frac{1}{2}[C_{23},C_{13}] =\frac{1}{2}[C_{13},C_{12}] .
\]
Another tedious computation shows the following relations to be true:
\begin{align}
\begin{split}\label{Rel1}
 [C_{12},F]=C_{23}C_{12}-C_{12}C_{13}+(C_2-C_1)(C_3-C_{123}), \\
 [C_{23},F]=C_{13}C_{23}-C_{23}C_{12}+(C_3-C_2)(C_1-C_{123}), \\
 [C_{13},F]=C_{12}C_{13}-C_{13}C_{23}+(C_1-C_3)(C_2-C_{123}).
 \end{split}
\end{align}
\end{example}

\subsection{Relations for $\mathcal{R}_n$}
We wish to find relations for the higher rank Racah algebra $\mathcal{R}_n$ for general $n$. To do so we mention the following lemma:
\begin{lemma}\label{embed}
Let $\{ K_p \}_{p=1..k}$ be a set of $k$ disjoint subsets of $[n]$. Define $K_B:=\cup_{q \in B} K_q$ with $B \subset [k]$. The following map is an injective morphism:
\[
	\theta: \mathcal{R}_k \rightarrow \mathcal{R}_n\,:\, C_B \rightarrow C_{K_B}.
\]
The image of $\theta$ is denoted by $\mathcal{R}_k^{K_1,\dots,K_k}$.
\end{lemma}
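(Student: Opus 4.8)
The plan is to realize $\theta$ as the restriction of a single algebra morphism defined on the entire ambient enveloping algebra; this makes both well-definedness and the homomorphism property automatic and collapses the whole statement to one injectivity check. First I would build a map $\Theta:\mathcal{U}(\mathfrak{su}(1,1))^{\otimes k}\to\mathcal{U}(\mathfrak{su}(1,1))^{\otimes n}$. Since $\mathcal{U}(\mathfrak{su}(1,1))^{\otimes k}\cong\mathcal{U}\bigl(\mathfrak{su}(1,1)^{\oplus k}\bigr)$, the universal property of the enveloping algebra says it is enough to specify a Lie algebra homomorphism $\mathfrak{su}(1,1)^{\oplus k}\to\mathcal{U}(\mathfrak{su}(1,1))^{\otimes n}$, and I send the generators $J_\epsilon$ of the $q$-th summand to $J_{\epsilon,K_q}$. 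That this is a Lie morphism uses two facts already at hand: the first lemma of this section guarantees that each triple $J_{0,K_q},J_{+,K_q},J_{-,K_q}$ obeys the $\mathfrak{su}(1,1)$ relations, while the disjointness of the $K_q$ forces $J_{\epsilon,K_p}$ and $J_{\epsilon',K_q}$ to act on different tensor slots and hence to commute for $p\neq q$. Extending by the universal property produces the algebra morphism $\Theta$.

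Next I would check that $\Theta$ carries the generators of $\mathcal{R}_k$ to those of $\mathcal{R}_n$ as prescribed. Because the $K_q$ are disjoint, $\Theta(J_{\epsilon,B})=\sum_{q\in B}J_{\epsilon,K_q}=\sum_{j\in K_B}J_{\epsilon,j}=J_{\epsilon,K_B}$, and substituting into the defining quadratic expression for the Casimir gives $\Theta(C_B)=C_{K_B}$. Thus $\Theta$ maps the subalgebra generated by the $C_B$ into the subalgebra generated by the $C_{K_B}$, so $\Theta(\mathcal{R}_k)\subseteq\mathcal{R}_n$ and the restriction $\Theta|_{\mathcal{R}_k}$ is exactly the asserted map $\theta$. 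In particular $\theta$ is a well-defined algebra morphism, with no need to verify by hand that the relations of $\mathcal{R}_k$ survive under $C_B\mapsto C_{K_B}$.

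The remaining substantive point is injectivity, for which it suffices to show $\Theta$ is injective. Here I would factor $\Theta$ through $\mathcal{U}(\mathfrak{su}(1,1))^{\otimes K}$ with $K=\bigcup_q K_q$: on each block the map is the iterated comultiplication $\mathcal{U}(\mathfrak{su}(1,1))\to\mathcal{U}(\mathfrak{su}(1,1))^{\otimes|K_q|}$ built from \eqref{comultiplication}, and $\Theta$ is the tensor product of these blocks over $q$, followed by insertion of $1$ in the slots indexed by $[n]\setminus K$. The key structural input is that comultiplication is injective: the counit $\epsilon$ of the Hopf algebra $\mathcal{U}(\mathfrak{su}(1,1))$ satisfies $(\mathrm{id}\otimes\epsilon)\circ\mu^*=\mathrm{id}$, and applying $\epsilon$ to all but one factor splits each iterated coproduct, so every block map is injective. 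A tensor product of injective linear maps over a field is again injective, and the final unit-insertion $v\mapsto v\otimes 1\otimes\cdots\otimes 1$ is plainly injective; composing these yields injectivity of $\Theta$, hence of $\theta$.

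I expect the injectivity step to be the only real obstacle, and specifically to lie in isolating the Hopf-algebraic reason — the counit splitting of the coproduct — rather than in attempting a basis-level comparison inside the Racah subalgebras, which would be unwieldy. Everything else is formal once $\theta$ is recognized as the restriction of $\Theta$.
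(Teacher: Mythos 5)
Your proposal is correct and is essentially the argument the paper intends: its proof is a pointer to the cited references, where $\theta$ is likewise obtained by restricting a morphism of enveloping algebras assembled from the iterated comultiplication \eqref{comultiplication} and the unit-insertion maps $\tau_k$ of the paper's Remark (cf.\ \eqref{Casimir-Upper}), with injectivity coming from the counit splitting of the coproduct exactly as you describe. The one hypothesis you should make explicit is that each $K_q$ is nonempty, since an empty block sends the $q$-th $\mathfrak{su}(1,1)$ summand to zero, so that $\theta(C_{\{q\}})=C_{\emptyset}=0$ and both your injectivity argument and the lemma itself would fail.
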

\begin{proof}
See \cite[section 4.2]{DeBie&Genest$vandeVijver&Vinet} or follow the strategy given in \cite[Lemma 2.4]{Crampe&vandeVijver&Vinet} .
\end{proof}

\begin{example}
The sets $K_1=\{2\}$, $K_2=\{1,4\}$ and $K_3=\{3\}$ are disjoint subsets of the set $\{1,2,3,4\}$. By Lemma \ref{embed} we have an injective morphism of $\mathcal{R}_3$ into $\mathcal{R}_4$. Explicitly it is given as follows:
\begin{align*}
 &\theta(C_{1})= C_{K_1}=C_{2},& \theta(C_{12})&=C_{K_1K_2}=C_{124},\\
 &\theta(C_{2})= C_{K_2}=C_{14},& \theta(C_{13})&=C_{K_1K_3}=C_{23},\\
 &\theta(C_{3})= C_{K_3}=C_3,& \theta(C_{23})&=C_{K_2K_3}=C_{134}, \\
 &\theta(C_{123})=C_{K_1K_2K_3}=C_{1234}. & &
\end{align*}

\end{example}

By Lemma \ref{embed} we can lift the relations of $\mathcal{R}_3$ given in Example \ref{R3} to $\mathcal{R}_n$. Let $K$, $L$ and $M$ be three disjoint subsets of $[n]$ and consider equality $\eqref{LinDep}$. By Lemma \ref{embed} we replace $1$ by $K$, $2$ by $L$ and $3$ by $M$:
\[
 C_{KLM}=C_{KL}+C_{KM}+C_{LM}-C_{K}-C_{L}-C_{M}.
\]
As before the notation $KL$ is short for $K \cup L$ in the index of a generator. We have found a set of linear dependencies between the generators of $\mathcal{R}_n$.  By induction one can prove the following:
\begin{lemma}\label{LinearDependence} For any set $K \subset [n]$, it holds that
\begin{equation*}
 C_K=\sum_{\left\{i,j\right\}\subset K} C_{ij}-\left(|K|-2\right)\sum_{i \in K} C_i .
\end{equation*}
\end{lemma}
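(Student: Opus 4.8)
The plan is to prove the identity by induction on the cardinality $m = |K|$, taking as the single structural input the three-set linear relation
\[
C_{KLM}=C_{KL}+C_{KM}+C_{LM}-C_K-C_L-C_M,
\]
which is the lift of \eqref{LinDep} to disjoint subsets via Lemma \ref{embed}. First I would dispose of the small cases by inspection: for $m=1$ the pair-sum is empty while the prefactor $(|K|-2)=-1$ turns the singleton sum into $+C_i$, giving $C_K=C_i$; for $m=2$ the prefactor $(|K|-2)=0$ annihilates the singleton sum and the pair-sum is the lone term $C_{ij}$; and $m=3$ is precisely \eqref{LinDep} read on singletons, providing a convenient anchor.

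For the inductive step with $m\geq 3$, I would fix two distinct elements $a,b\in K$, set $K':=K\setminus\{a,b\}$ so that $|K'|=m-2$, and apply the three-set relation to the disjoint decomposition $K=\{a\}\sqcup\{b\}\sqcup K'$:
\[
C_K = C_{\{a,b\}} + C_{\{a\}\cup K'} + C_{\{b\}\cup K'} - C_a - C_b - C_{K'}.
\]
Each of the three large terms $C_{\{a\}\cup K'}$, $C_{\{b\}\cup K'}$, $C_{K'}$ has cardinality strictly smaller than $m$, so the induction hypothesis expands each one into its pair-sum minus the appropriate multiple of its singleton-sum. The remaining task is purely to collect coefficients on the right-hand side and confirm they match the claimed formula.

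The pair terms are the easy part of this bookkeeping: a pair $\{x,y\}\subset K'$ picks up $+1$ from each of $C_{\{a\}\cup K'}$ and $C_{\{b\}\cup K'}$ and $-1$ from $-C_{K'}$, netting $+1$; a pair meeting $\{a,b\}$ appears with coefficient $+1$ in exactly one of the large terms; and $\{a,b\}$ itself is the explicit $C_{\{a,b\}}$. The genuine obstacle is the singleton count, where the three expansions carry different prefactors that must conspire to give the uniform value $-(m-2)$. Concretely, one must check that for $a$ (and symmetrically $b$) the explicit $-C_a$ combines with the factor from $C_{\{a\}\cup K'}$ as $-1-(m-3)=-(m-2)$, while for $x\in K'$ the two factors $-(m-3)$ released by the $(m-1)$-element sets combine with the factor $+(m-4)$ from $-C_{K'}$ as $-(m-3)-(m-3)+(m-4)=-(m-2)$. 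Once these two arithmetic identities are verified the induction closes. I expect no conceptual difficulty here beyond keeping the three index-set partitions disjoint and correctly aligned, so that no pair or singleton is double-counted or dropped.
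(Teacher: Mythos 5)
Your proof is correct and takes essentially the same approach the paper intends: the paper states the lemma with only the remark that it follows by induction from the lifted three-set relation $C_{KLM}=C_{KL}+C_{KM}+C_{LM}-C_K-C_L-C_M$ (Lemma \ref{embed} applied to \eqref{LinDep}), and your induction on $|K|$ via the decomposition $K=\{a\}\sqcup\{b\}\sqcup K'$ supplies exactly those omitted details. The coefficient bookkeeping checks out, including the two key arithmetic identities $-1-(m-3)=-(m-2)$ and $-2(m-3)+(m-4)=-(m-2)$.
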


In $\mathcal{R}_3$ we also know that $C_1$ is central. In particular we have $[C_1,C_{12}]=0$ and $[C_1,C_2]=0$. By Lemma \ref{embed} we find $[C_K, C_{KL}]=0$ and $[C_K,C_L]=0$. The following lemma follows:
\begin{lemma}\label{Commutative}
Let $A$ and $B$ be subsets of $[n]$. If either $A \subset B$, $B \subset A$ or $A \cap B=\emptyset$ then $[C_A,C_B]=0$.
\end{lemma}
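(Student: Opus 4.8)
The plan is to deduce the lemma from the two relations $[C_K, C_{KL}] = 0$ and $[C_K, C_L] = 0$ obtained just above, which in turn come from the centrality of $C_1$ in $\mathcal{R}_3$ transported through the embedding $\theta$ of Lemma \ref{embed}. The crucial feature of $\theta$ is that, being an algebra morphism, it preserves commutators, so the $\mathcal{R}_3$ identities $[C_1, C_{12}] = 0$ and $[C_1, C_2] = 0$ pass to $[C_K, C_{KL}] = 0$ and $[C_K, C_L] = 0$ in $\mathcal{R}_n$ under the substitution $1 \mapsto K$, $2 \mapsto L$ for disjoint $K, L \subset [n]$. With these in hand, only a matching of the three hypotheses remains.

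For the nested case $A \subset B$, I would put $K := A$ and $L := B \setminus A$, which are disjoint, so that $C_A = C_K$ and $C_B = C_{K \cup L} = C_{KL}$; the relation $[C_K, C_{KL}] = 0$ then gives $[C_A, C_B] = 0$. The subcase $B \subset A$ is identical after exchanging the roles of $A$ and $B$, and $A = B$ is trivial. For the disjoint case $A \cap B = \emptyset$, I would instead set $K := A$ and $L := B$ directly, so that $C_A = C_K$, $C_B = C_L$, and $[C_K, C_L] = 0$ yields $[C_A, C_B] = 0$.

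I do not expect a genuine obstacle here: the whole statement is essentially a relabelling of two $\mathcal{R}_3$ identities. The only points meriting a check are that the subsets chosen to play the roles of $K$ and $L$ are indeed disjoint—so that the hypotheses of Lemma \ref{embed} are met—and that $\theta$ carries the relevant generator to exactly $C_A$ or $C_B$; both follow immediately from the prescription $C_B \mapsto C_{K_B}$ with $K_B = \cup_{q \in B} K_q$.
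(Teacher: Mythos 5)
Your proposal is correct and matches the paper's own argument: the paper derives $[C_K,C_{KL}]=0$ and $[C_K,C_L]=0$ for disjoint $K,L$ by transporting the centrality of $C_1$ in $\mathcal{R}_3$ through the embedding of Lemma \ref{embed}, and then states that the lemma follows. You have merely made explicit the case matching ($K:=A$, $L:=B\setminus A$ for the nested case, $K:=A$, $L:=B$ for the disjoint case, with $A=B$ trivial) that the paper leaves implicit.
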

A consequence of this lemma is that the generators $C_i$, $i \in [n]$ and $C_{[n]}$ are central in $\mathcal{R}_n$.
This lemma shows the existence of many Abelian subalgebras. 
\begin{definition}\label{label}
Consider the following chain $\mathcal{A}$ of subsets of $[n]$:
\[
A_1 \subset A_2 \subset \dots \subset A_{n-2}
\]
with $|A_k|=k+1$. We define the labeling Abelian algebra $\mathcal{Y}_\mathcal{A}$ to be the algebra generated by $\{C_{A_i}\,|\, i=1\dots n-2\}$. 
\end{definition}
We exclude the sets with one element and all the elements because the related generators $C_i$ and $C_{[n]}$ are central. Observe that the number of generators in a labeling Abelian algebra $\mathcal{Y}_\mathcal{A}$ equals the rank of the higher rank Racah algebra.

In what follows we will give the commutator of any pair of generators. It suffices to do this only for the generators with two indices $C_{ij}$ and one index $C_i$ by Lemma \ref{LinearDependence}. The generators with one index are central so we focus on the generators with two indices. We introduce the following operator:
\[
F_{ijk}=\frac{1}{2}[C_{ij},C_{jk}].
\]
The order of the indices of $F$ is important.  Switching any two indices leads to change in sign:
\[
F_{jik}=\frac{1}{2}[C_{ji},C_{ik}]=\frac{1}{2}[C_{kj},C_{ji}]=-\frac{1}{2}[C_{ji},C_{kj}]=-\frac{1}{2}[C_{ij},C_{jk}]=-F_{ijk}.
\]

We apply Lemma $\ref{embed}$ on relation \eqref{Rel1}. We are focusing on generators with two indices so  we set $K=\{i\}$, $L=\{j\}$ and $M=\{k\}$:
\[
[C_{jk},F_{ijk}]=C_{ik}C_{jk}-C_{jk}C_{ij}+(C_k-C_j)(C_i-C_{ijk}).
\]
The remaining possible commutators are found through straightforward but tedious computations. For notational purposes we introduce 
\[
 P_{ij}=C_{ij}-C_i-C_j.
\]
We find:
\begin{align*}
 [P_{kl},F_{ijk}]&=P_{ik}P_{jl}-P_{il}P_{jk}, \\
 [F_{ijk},F_{jkl}]&=F_{jkl}P_{ij}-F_{ikl}(P_{jk}+2C_j)-F_{ijk}P_{jl},\\
 [F_{ijk},F_{klm}]&=F_{ilm}P_{jk}-P_{ik}F_{jlm}.
\end{align*}

\section{The Racah problem for $\mathfrak{su}(1,1)$ and multivariate Racah polynomials}\label{S3}
Racah problems play an important role in algebras related to quantum systems. Famous solutions to the Racah problem for $\mathfrak{sl}_2$ or equivalently $\mathfrak{su}(1,1)$ are the Clebsch-Gordan coefficients, the $3j$-symbols, the $6j$-symbols and in general the $3nj$-symbols for $\mathfrak{su}(1,1)$. We will cast these problems into a single framework. Let us first pose the problem we want to solve.
\begin{problem} The Racah problem for $\mathfrak{su}(1,1)$ is the following:
Let $V$ be an irreducible representation of $\mathcal{R}_n$. Consider two different labeling Abelian algebras $\mathcal{Y}_1$ and $\mathcal{Y}_2$. Let the set $\{ \psi_k \}$ be a basis of $V$ diagonalized by $\mathcal{Y}_1$ and $\{ \varphi_s\} $ be a basis of V diagonalized by $\mathcal{Y}_2$. What are the connection coefficients  between these to bases? In other words, find the numbers $R_{sk}$ such that
\[
\sum_{k} R_{sk} \psi_k=\varphi_s.
\]
\end{problem}
We will consider finite dimensional representations.
The solution to the Racah problem for the rank one Racah algebra $\mathcal{R}_3$ has been known for a long time, see \cite{Gao&Wang&Hou-2013,Genest&Vinet&Zhedanov-2014-2,Genest&Vinet&Zhedanov-2014-3,Granovskii&Zhedanov-1988}. We will give the result here. In the rank one case we have three labeling Abelian algebras: $\mathcal{Y}_1=\langle C_{12} \rangle$,  $\mathcal{Y}_2=\langle C_{23} \rangle$, and  $\mathcal{Y}_3=\langle C_{13} \rangle$. The connection coefficients between $\mathcal{Y}_1$ and $\mathcal{Y}_2$ are Racah polynomials.

\begin{definition} (\cite{Koekoek&Lesky&Swarttouw-2010})
Let $r_n(\alpha,\beta,\gamma, \delta; x)$ be the classical univariate Racah polynomials
 \begin{align*}
  	r_n(\alpha,\beta,\gamma, \delta; x)&:=(\alpha+1)_n(\beta+\delta+1)_n(\gamma+1)_n \,
  	{}_4F_3 \left[ \substack{ -n, n+\alpha+\beta+1, -x, x+\gamma+\delta+1 \\ \alpha+1, \beta+\delta+1, \gamma+1}; 1 \right].
 \end{align*}
\end{definition}
We will give the exact form for the connection coefficients. To this end we first introduce the following polynomial:
\[
\kappa(x,\beta)=\left(x+\frac{\beta+1}{2}\right)\left(x+\frac{\beta-1}{2}\right).
\]
\begin{lemma}\label{overlap}
Let $V$ be an irreducible representation of $\mathcal{R}_n$ with $\dim(V)=N+1$. Assume that the central elements on this representation act as the following scalars:
\begin{align*}
 C_1&=\kappa(0,\beta_0), \\		
 C_2&=\kappa(0,\beta_1-\beta_0-1), \\
 C_3&=\kappa(0,\beta_2-\beta_1-1),\\
 C_{123}&=\kappa(N,\beta_2).
\end{align*}
Assume that $\{\psi_k\}$ is a basis of $V$ diagonalized by $C_{12}$ and $\{ \varphi_s\}$ is a  basis of $V$ diagonalized by $C_{23}$ with the following eigenvalues:
\begin{align*}
C_{12}\psi_k&=\kappa(k,\beta_1)\psi_k, \\
C_{23}\varphi_s&=\kappa(s,\beta_2-\beta_0-1)\varphi_s.
\end{align*}
The connection coefficients are up to a gauge constant equal to 
\[
R_{sk}=r_{s}(\beta_1-\beta_0-1,\beta_{2}-\beta_1-1,-N-1, \beta_1+N;k).
\]
\end{lemma}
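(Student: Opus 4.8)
The plan is to realize the abstract rank-one Racah algebra $\mathcal{R}_3$ concretely enough that $C_{12}$ and $C_{23}$ become explicit operators whose simultaneous eigenbasis connection coefficients can be read off as Racah polynomials. The cleanest route exploits the fact that the overlap problem is governed entirely by the three-term recurrence relation that $C_{23}$ satisfies in the eigenbasis of $C_{12}$. So first I would fix the $C_{12}$-eigenbasis $\{\psi_k\}$, on which $C_{12}$ acts as the scalar $\kappa(k,\beta_1)$, and compute the action of $C_{23}$ in this basis. Because $C_{23}=C_2+C_3+P_{23}$ and the operators $C_2,C_3$ are central scalars while $F_{123}=\tfrac12[C_{12},C_{23}]$ and the relations \eqref{Rel1} tie $C_{23}$ to $C_{12}$, one shows that $C_{23}$ is tridiagonal in the $\{\psi_k\}$ basis: it raises, lowers, and preserves $k$ by one step. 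The diagonal and off-diagonal matrix entries are determined by the structure relations of $\mathcal{R}_3$ together with the central scalars prescribed in the hypothesis.

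\textbf{Extracting the recurrence.} Once $C_{23}$ is tridiagonal, expanding $\varphi_s=\sum_k R_{sk}\psi_k$ and imposing $C_{23}\varphi_s=\kappa(s,\beta_2-\beta_0-1)\varphi_s$ turns the eigenvalue equation into a three-term recurrence in the index $k$ for the coefficients $R_{sk}$, with $s$ entering only through the eigenvalue $\kappa(s,\beta_2-\beta_0-1)$. The second step is therefore to identify this recurrence with the one characterizing the classical Racah polynomials $r_s(\alpha,\beta,\gamma,\delta;k)$ of the Definition. The Askey–Wilson/Koekoek–Lesky–Swarttouw normalization of the Racah recurrence has recurrence coefficients that are explicit rational functions of $k$; matching these against the matrix entries of $C_{23}$ forces the parameter identification
\[
\alpha=\beta_1-\beta_0-1,\quad \beta=\beta_2-\beta_1-1,\quad \gamma=-N-1,\quad \delta=\beta_1+N,
\]
and the eigenvalue $\kappa(s,\beta_2-\beta_0-1)$ must coincide with the spectral variable $\lambda(s)=s(s+\gamma+\delta+1)$ (up to an affine shift absorbed by the $\kappa$ normalization). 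Verifying that the quadratic in $s$ produced by $\kappa(s,\beta_2-\beta_0-1)$ agrees with the Racah eigenvalue under this parameter dictionary is the crucial consistency check.

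\textbf{The main obstacle} will be the bookkeeping in the first step: computing the exact tridiagonal matrix elements of $C_{23}$ in the $C_{12}$-eigenbasis and showing they have precisely the $\kappa$-quadratic shape that the Racah recurrence demands. This is where the central-value hypotheses on $C_1,C_2,C_3,C_{123}$ get used in full, and where the truncation of the recurrence at $k=0$ and $k=N$ (forcing a finite, $(N+1)$-dimensional representation and integer $\gamma=-N-1$) must be matched to the finiteness of $V$. I expect that rather than recomputing these entries from scratch one would invoke the known rank-one result cited from \cite{Gao&Wang&Hou-2013,Genest&Vinet&Zhedanov-2014-2,Granovskii&Zhedanov-1988}, in which $\mathcal{R}_3$ is presented in a standard tridiagonal (Leonard pair) form; the argument then reduces to translating that normalization into the present $\kappa$-parametrization.

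\textbf{Finishing.} With the recurrence matched, the solution of a three-term recurrence with prescribed boundary behavior is unique up to an overall normalization, which is exactly the asserted gauge constant. Hence $R_{sk}$ equals $r_s(\beta_1-\beta_0-1,\beta_2-\beta_1-1,-N-1,\beta_1+N;k)$ up to that gauge factor, completing the proof. The orthogonality and finiteness of $\{r_s\}$ under these parameters, guaranteed by the classical theory of Racah polynomials, confirm that $\{\varphi_s\}$ is indeed a basis of the same $(N+1)$-dimensional space, consistent with $C_{23}$ being diagonalizable with the stated spectrum.
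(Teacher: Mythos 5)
The paper itself gives no proof of Lemma \ref{overlap}: it states the result and defers to the cited literature (\cite{Gao&Wang&Hou-2013,Genest&Vinet&Zhedanov-2014-2,Genest&Vinet&Zhedanov-2014-3,Granovskii&Zhedanov-1988}). Your proposal reconstructs exactly the classical argument from those references: use the $\mathcal{R}_3$ relations \eqref{Rel1} (after eliminating $C_{13}$ via \eqref{LinDep}) to show that the double commutator $[C_{12},[C_{12},C_{23}]]$ equals an anticommutator of $C_{12}$ with $C_{23}$ plus diagonal terms, which for the spectrum $\lambda_k=\kappa(k,\beta_1)$ forces $C_{23}$ to be tridiagonal in the basis $\{\psi_k\}$ (the Leonard-pair mechanism), then identify the resulting three-term relation for the $R_{sk}$ with the Racah difference equation and invoke uniqueness of the solution up to normalization, with truncation at $k=0,N$ encoded by $\gamma=-N-1$. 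This is the right route and, in structure, the only known one.

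There is, however, one concrete error in what you call the crucial consistency check: you match the eigenvalue $\kappa(s,\beta_2-\beta_0-1)$ against $\lambda(s)=s(s+\gamma+\delta+1)$. That is the wrong bispectral partner. The recurrence you derive shifts the index $k$, i.e.\ it is the \emph{difference equation in the variable} of $r_s(\alpha,\beta,\gamma,\delta;k)$, whose eigenvalue is $s(s+\alpha+\beta+1)$, not $s(s+\gamma+\delta+1)$; the latter is the spectral function of the recurrence in the degree and instead governs the variable lattice. Under the parameter dictionary one has $\gamma+\delta+1=(-N-1)+(\beta_1+N)+1=\beta_1$, so $\lambda(k)=k(k+\gamma+\delta+1)$ matches the $C_{12}$-spectrum $\kappa(k,\beta_1)$ up to an additive constant, while $\alpha+\beta+1=(\beta_1-\beta_0-1)+(\beta_2-\beta_1-1)+1=\beta_2-\beta_0-1$ makes $s(s+\alpha+\beta+1)$ match $\kappa(s,\beta_2-\beta_0-1)$ up to the constant $\tfrac{(\beta_2-\beta_0)(\beta_2-\beta_0-2)}{4}$, which must be (and is) absorbed by the diagonal entries of $C_{23}$. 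As written, your check with $s(s+\beta_1)$ would appear to fail for generic $\beta_0,\beta_2$; with the roles of $(\alpha,\beta)$ and $(\gamma,\delta)$ restored, the identification closes and the rest of your argument (uniqueness up to the gauge constant, finiteness from the truncation) is sound.
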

The constants $\beta_0$, $\beta_1$ and $\beta_2$ depend on the representation and can be calculated from the action of the central elements. In what follows we will denote the overlap coefficients depending on these central elements:
\[
R_{sk}(C_1,C_2,C_3,C_{123})
\]

Consider now the general case. Let $\mathcal{Y}_{\mathcal{A}_1}$ and $\mathcal{Y}_{\mathcal{A}_2}$ be two labeling Abelian algebras. To find the connection coefficients between bases diagonalized by $\mathcal{Y}_{\mathcal{A}_1}$ and $\mathcal{Y}_{\mathcal{A}_2}$ we will assume that the chains of sets $\mathcal{A}_1$ and $\mathcal{A}_2$ differ by only one element:
\begin{align*}
\mathcal{A}_1: A_1\subset \dots \subset A_{i-1} \subset K \subset A_{i+1} \subset \dots \subset A_{n-2}, \\ 
\mathcal{A}_2: A_1\subset \dots \subset A_{i-1} \subset L \subset A_{i+1} \subset \dots \subset A_{n-2}.
\end{align*}
Let $\{ \psi_{\vec k} \}$ be a basis of the representation $V$ diagonalized by $\mathcal{Y}_{\mathcal{A}_1}$ with $\vec k=(k_1,\dots, k_{n-2})$
\begin{align*}
 C_{A_l}\psi_{\vec k}=\lambda_{k_l}\psi_{\vec k}, \\
 C_{K}\psi_{\vec k}=\lambda_{k_i}\psi_{\vec k}.
\end{align*}
Let $\{ \varphi_{\vec s} \}$ be a basis of the representation $V$ diagonalized by $\mathcal{Y}_{\mathcal{A}_2}$ with $\vec s=(s_1,\dots, s_{n-2})$
\begin{align*}
 C_{A_l}\varphi_{\vec s}=\mu_{s_l}\varphi_{\vec s}, \\
 C_{L}\varphi_{\vec s}=\mu_{s_i}\varphi_{\vec s}.
\end{align*}
We want to describe the connection coefficients $R_{\vec s \vec k}$ such that
\[
\sum_{\vec k} R_{\vec s\vec k}\psi_{\vec k}=\varphi_{\vec s}.
\]
Because both bases $\{ \psi_{\vec k} \}$ and $\{ \varphi_{\vec s} \}$ are diagonalized by the generators $C_{A_l}$ with $l \in [n-2]\backslash \{i\}$ we may assume that $\mu_{s_l}=\lambda_{k_l}$. From this we reduce the connection coefficients as follows
\[
R_{\vec s\vec k}=R_{s_i k_i}\prod_{l\neq i} \delta_{s_l k_l}.
\]
We reduced the problem to finding the coefficients $R_{s_i k_i}$. Consider any eigenspace of the set of operators $\{C_{A_l}\,|\, l \neq i\}$ and denote it by $E$. The operators $C_K$ and $C_L$ commute with the set of operators so they will preserve these eigenspaces. Moreover, these eigenspaces are representations for $\mathcal{R}_3$ by Lemma \ref{embed}, if we consider the embedding morphism by setting.
\[ 1 \rightarrow K\backslash L,\quad 2 \rightarrow K \cap L=A_{i-1}, \quad 3 \rightarrow L \backslash K. \]
This algebra $\mathcal{R}^{K\backslash L,K \cap L, L\backslash K}_3$ is generated by
\[
C_{K\backslash L}, \quad C_{K \cap L},\quad C_{A_{i-1}}, \quad C_{L \backslash K},\quad C_{K}, \quad C_{L}, \quad C_{A_{i+1}}
\]
where $A_{i+1}=K \cup L$ and each of the generators commute with $\{C_{A_l}\,|\, l \neq i\}$ preserving the eigenspaces. Consider the intersection of $\{\psi_{\vec k}\}$ with the eigenspace $E$. This will be a basis of the  eigenspace $E$ diagonalized by $C_{12}$. Equivalently, the intersection of $\{\varphi_{\vec s} \}$ and the same eigenspace $E$ will be a basis of this eigenspace $E$ diagonalized by $C_{23}$. By Lemma \ref{overlap} the connection coefficients between the two bases of the representation $E$ of $\mathcal{R}_3$ will therefore be the Racah polynomials $R_{s_ik_i}(C_{K\backslash L},C_{A_{i-1}}, C_{L\backslash K},C_{A_{i+1}})$. We conclude that the connection coefficients between two bases diagonalized by two labeling Abelian algebra differing by only one generator is given by
\[
 R_{\vec s \vec k}=R_{s_ik_i}(C_{K\backslash L},C_{A_{i-1}}, C_{L\backslash K},C_{A_{i+1}})\prod_{l\neq i} \delta_{s_l k_l}.
\]
Let us now consider two general labeling Abelian algebras. The connection coefficients can be calculated by introducing a sequence of labeling Abelian algebras so that each subsequent pair of algebras only differ by one generator. Each algebra in the sequence will diagonalize a basis of the representation $V$ of $\mathcal{R}_n$. The connection coefficients between two bases diagonalized by two subsequent algebras in the sequence will be Racah polynomials. From this, we can calculate the connection coefficients between any two labeling Abelian algebras. Let us give an example.
\begin{example}\label{bivariate}
Let $\mathcal{Y}_1=\langle C_{12}, C_{123}\rangle$ and $\mathcal{Y}_2=\langle C_{34}, C_{234}\rangle$ be two labeling Abelian algebras of $\mathcal{R}_4$. Consider the following sequence of labeling Abelian algebras:
\[
\langle C_{12}, C_{123}\rangle , \langle C_{23}, C_{123}\rangle, \langle C_{23}, C_{234}\rangle, \langle C_{34}, C_{234} \rangle.
\]
Let $\{ \psi_{k_1, k_2}\}$, $\{ \phi^1_{\ell_1, \ell_2}\}$,   $\{ \phi^2_{\ell_1, \ell_2}\}$ and $\{ \varphi_{s_1, s_2}\}$ be the bases diagonalized by the algebras in the sequence respectively.
We calculate the connection coefficients:
\begin{align*}
\varphi_{s_1, s_2}	&=\sum_{\ell_1} R_{s_1 \ell_1}(C_2,C_3,C_4,C_{234})\phi^2_{\ell_1, s_2} \\
			&=\sum_{\ell_1,k_2} R_{s_1 \ell_1}(C_2,C_3,C_4,C_{234}) R_{s_2 k_2}(C_1, C_{23}, C_{4}, C_{1234}) \phi^1_{\ell_1, k_2} \\
			&=\sum_{k_1,\ell_1 ,k_2} R_{s_1 \ell_1}(C_2,C_3,C_4,C_{234})R_{s_2 k_2}(C_1, C_{23}, C_{4}, C_{1234}) \\
			&\qquad \qquad \times R_{\ell_1k_1}(C_1,C_2,C_3,C_{123}) \psi_{k_1, k_2}.
\end{align*}

The first equality is obtained by considering the eigenspaces of $C_{234}$ which acts as a representation space for $\mathcal{R}^{2,3,4}(3)$. The second equality is obtained by considering the eigenspaces of $C_{23}$ which acts as a representation space for $\mathcal{R}^{1,23,4}(3)$ and the final equality is obtained by considering the eigenspaces of $C_{123}$ which acts as a representation space for $\mathcal{R}^{1,2,3}(3)$. The connection coefficients are
\[
R_{\vec s \vec k}=\sum_{\ell_1} R_{s_1 \ell_1}(C_2,C_3,C_4,C_{234})R_{s_2 k_2}(C_1, C_{23}, C_{4}, C_{1234})R_{\ell_1k_1}(C_1,C_2,C_3,C_{123}).
\]
\end{example}
One can ask the question if for any pair of labeling Abelian algebras we are able to find connection coefficients. To answer this question we introduce the connection graph. The vertices represent labeling Abelian algebras and there is an edge between two vertices if the labeling Abelian algebras differ by one generator. For $\mathcal{R}_4$ the connection graph look as follows:
\begin{center}
	\scalebox{.75}{
\begin{tikzpicture}
\draw (30:3cm)--(60:3cm) ;
\draw (60:3cm)--(90:3cm);
\draw (90:3cm)--(120:3cm);
\draw (120:3cm)--(150:3cm);
\draw (150:3cm)--(180:3cm) ;
\draw (180:3cm)--(210:3cm);
\draw (210:3cm)--(240:3cm);
\draw (240:3cm)--(270:3cm);
\draw (270:3cm)--(300:3cm) ;
\draw (300:3cm)--(330:3cm);
\draw (330:3cm)--(0:3cm);
\draw (0:3cm)--(30:3cm);
\fill (30:3cm) circle [radius=2pt] node[anchor=south west] {$(C_{12},C_{123})$};
\fill (60:3cm) circle [radius=2pt] node[anchor=south west] {$(C_{12},C_{124})$};
\fill (90:3cm) circle [radius=2pt] node[anchor=south] {$(C_{14},C_{124})$};
\fill (120:3cm) circle [radius=2pt] node[anchor=south east] {$(C_{24},C_{124})$};
\fill (150:3cm) circle [radius=2pt] node[anchor=south east] {$(C_{24},C_{234})$};
\fill (180:3cm) circle [radius=2pt] node[anchor=east] {$(C_{23},C_{234})$};
\fill (210:3cm) circle [radius=2pt] node[anchor=north east] {$(C_{34},C_{234})$};
\fill (240:3cm) circle [radius=2pt] node[anchor=north east] {$(C_{34},C_{134})$};
\fill (270:3cm) circle [radius=2pt] node[anchor=north] {$(C_{14},C_{134})$};
\fill (300:3cm) circle [radius=2pt] node[anchor=north west] {$(C_{13},C_{134})$};
\fill (330:3cm) circle [radius=2pt] node[anchor=north west] {$(C_{13},C_{123})$};
\fill (0:3cm) circle [radius=2pt] node[anchor=west] {$(C_{23},C_{123})$};
\draw (180:3cm)--(0:3cm);
\draw (90:3cm)--(270:3cm);
\draw (60:3cm)--(120:3cm);
\draw (150:3cm)--(210:3cm);
\draw  (240:3cm) --(300:3cm) ;
\draw  (330:3cm)--(30:3cm) ;
\end{tikzpicture}}
\end{center}
This graph is connected so we can find connection coefficients between any two labeling Abelian algebras of $\mathcal{R}_4$ by the method shown in Example \ref{bivariate}. One can show that the connection graph is connected for the Racah algebra of any rank. For a proof see \cite{DeBie&Genest$vandeVijver&Vinet}.
Other methods to find the connection coefficients have been presented before. See \cite{Scar} for the tree method as well as \cite{Vanderjeugt,Vanderjeugt-2003}.

\begin{example}\label{MultRacah}
In this example we show how to obtain the multivariate Racah polynomials as defined by Tratnik \cite{Tratnik-1991}. Let $\mathcal{Y}_{\text{initial}}=\langle C_{12}, C_{123}, \dots , C_{[n-1]} \rangle$ and $\mathcal{Y}_{\text{final}}=\langle C_{23}, C_{234}, \dots , C_{[2..n]} \rangle$. A sequence of intermediate algebras, each differing by one element with the next, is given as follows:
\[
\mathcal{Y}_i=\langle C_{23}, \dots, C_{[2..2+i]}, C_{[2+i]}, \dots, C_{[n-2]} \rangle.
\]
The connection coefficients up to a gauge constant will be given by
\[
R_{\vec k \vec s}=\prod_{i=1}^{n-2} R_{k_i s_i}(C_1, C_{[2..i+1]}, C_{i+2}, C_{[i+2]}).
\]
If the action of the central elements on the irreducible representation is given by $C_1=\kappa(0,\beta_0)$, $C_{i+1}=\kappa(0,\beta_{i}-\beta_{i-1}-1)$ for some number $\beta_0, \dots, \beta_{n-1}$, then these connection coefficients are written explicitly as
\begin{align*}
 	R_{\vec k \vec s }
 	&=\prod_{j=1}^{n-2} r_{k_j}(2|\vec k|_{j-1}+\beta_j-\beta_0-1,\beta_{j+1}-\beta_j-1,|\vec k|_{j-1}-s_{j+1}-1, \\
 	& \qquad |\vec k|_{j-1}+\beta_j+s_{j+1},-|\vec k|_{j-1}+s_j) ,
 \end{align*}
where $|\vec k|_{j}=\sum_{i=1}^{j}k_i$. This result was obtained in \cite{DeBie&vandeVijver}. These are exactly the multivariate Racah polynomials as defined by Tratnik \cite{DeBie&vandeVijver,Geronimo&Iliev-2010, Tratnik-1991}. In the rank two case one finds bivariate Racah polynomials which coincides with the result in \cite{Post}.
\end{example}

\begin{remark}\label{ConnOP}
As we shall see in the next section, the Racah algebra has realizations in terms of differential operators. After an appropriate gauge transformation, these operators preserve the space of polynomials \cite{KMT}. The common eigenfunctions of the labeling Abelian algebras become multivariable Jacobi polynomials which are mutually orthogonal with respect to the Dirichlet distribution. The coefficients $R_{\vec k \vec s}$ connect different bases of Jacobi polynomials, obtained by an appropriate action of the symmetric group, which preserves the Dirichlet distribution. Within this framework, Lemma \ref{overlap} says that the entries of the transition matrix between different bases of two-variable Jacobi polynomials for the Dirichlet distribution of order $3$ can be expressed in terms of the Racah polynomials, and this was proved by Dunkl \cite{DuOP}. The extensions to arbitrary dimension and, in particular, techniques to compute $R_{\vec k \vec s}$, different relations and the explicit formula for the cyclic permutation in Example \ref{MultRacah} were obtained in \cite{Iliev&Xu-2017}.
\end{remark}

\section{Realizations of the higher rank Racah algebra}\label{S4}

\subsection{The generic superintegrable system on the sphere}
The generic superintegrable system is already well studied. See for example \cite{Genest&Vinet&Zhedanov-2014-3,Genest&Vinet&Zhedanov-2014-2,Iliev-2017,Iliev-2018,Kalnins&Miller&Post-2007,Kalnins&Miller&Post-2011,Kalnins&Miller&Post-2013}. We will introduce this model here.

Let  $\mathbb{S}^{n-1}=\{ (y_1,\dots, y_n) \in \mathbb{R}^n\, |\, \sum_i y_i^2=1 \}$ be the sphere in $\mathbb{R}^n$. We have $n$ variables $y_i$ and we denote $\partial_i:=\partial_{y_i}$. The generic superintegrable system on the sphere is the quantum system with Hamiltonian
\[
 \mathcal{H}=\Delta_{LB}+\sum_{i=1}^n \frac{b_i}{y_i^2}.
\]
The parameters $b_i$ are real numbers. The operator $\Delta_{LB}$ is the Laplace-Beltrami operator:
\[
\Delta_{LB}=\sum_{1\leq i<j \leq n} (y_i\partial_j-y_j\partial_i)^2.
\]
We will show that the Racah algebra can be realized as the symmetry algebra of the Hamiltonian $\mathcal{H}$.
To this end we consider for each $i$ the following realization of $\mathfrak{su}(1,1)$:
\[ 
J_{+,i}=\frac{y_i^2}{2}, \quad J_{-,i}=\frac{1}{2}\left(\partial_i^2+\frac{b_i}{y_i^2}\right), \quad J_{0,i}=\frac{1}{4}(2y_i\partial_i+1). 
\]
The $b_i$ are arbitrary constants. We will construct the higher rank Racah algebra in this realization as explained in section \ref{Rn}.  We need to find the generators. By Lemma \ref{LinearDependence} it suffices to give the generators $C_{ij}$ and $C_i$. An easy calculation shows that $C_i=-\frac{3+4b_i}{16}$. We calculate $C_{ij}$.
\begin{align*}
C_{ij}&=J_{0,ij}^2-J_{0,ij}-J_{+,ij}J_{-,ij} \\
	&=\frac{1}{16}(2y_i\partial_i+2y_j\partial_j+2)^2-\frac{1}{4}(2y_i\partial_i+2y_j\partial_j+2)-\frac{y_i^2+y_j^2}{4}\left(\partial_i^2+\partial_j^2+\frac{b_i}{y_i^2}+\frac{b_j}{y_j^2}\right) \\
	&=-\frac{1}{4}\left((y_i\partial_j-y_j\partial_i)^2+\frac{b_iy_j^2}{y_i^2}+\frac{b_jy_i^2}{y_j^2}+b_i+b_j+1\right).
\end{align*}
Observe that the operators $C_{ij}$ coincide up to a constant with the operators $\mathcal{H}_{i,j}$ defined in \cite{Iliev-2018}. These operators $\mathcal{H}_{i,j}$ are symmetries of the Hamiltonian $\mathcal{H}$. We will show independently that the operators $C_{ij}$ are symmetries of the Hamiltonian. We consider the operator  $C_{[n]}$. By Lemma \ref{Commutative} this operator is central. We calculate an explicit expression for $C_{[n]}$ by using Lemma \ref{LinearDependence} and the fact we are working on the sphere $\mathbb{S}^{n-1}$ so $ \sum_i y_i^2=1$.
\begin{align*}
C_{[n]}	&=\sum_{\{i,j\}\subset [n]} C_{ij}-(n-2)\sum_{i=1}^n C_i \\
		&=-\frac{1}{4}\,\sum_{\{i,j\}\subset [n]} \left( (y_i\partial_j-y_j\partial_i)^2+\frac{b_iy_j^2}{y_i^2}+\frac{b_jy_i^2}{y_j^2}+b_i+b_j+1\right)+(n-2)\sum_{i=1}^n  \frac{3+4b_i}{16} \\
		&=-\frac{1}{4}\left(\Delta_{LB}+\frac{n(n-1)}{2}+\sum_{i \neq j}  \frac{b_iy_j^2}{y_i^2}+(n-1)\sum_{i=1}^n b_i\right)+(n-2)\sum_{i=1}^n \frac{3+4b_i}{16}  \\
		&=-\frac{1}{4}\left(\Delta_{LB}+\frac{n(n-1)}{2}+\sum_{i , j}  \frac{b_iy_j^2}{y_i^2}+(n-2)\sum_{i=1}^n b_i\right)+(n-2)\sum_{i=1}^n \frac{3+4b_i}{16} \\
		&=-\frac{1}{4}\left(\Delta_{LB}+\frac{4n-n^2}{4}+\sum_{i}  \frac{b_i}{y_i^2} \sum_j y_j^2 \right)\\
		&=-\frac{1}{4}\left(\Delta_{LB}+\sum_{i}  \frac{b_i}{y_i^2} \right)+\frac{n^2-4n}{16}\\
		&=-\frac{1}{4}\mathcal{H}+\frac{n^2-4n}{16}.
\end{align*}
It follows that as $C_{ij}$ commutes with $C_{[n]}$ it also commutes with $\mathcal{H}$. The Racah algebra thus governs the symmetries of the Hamiltonian $\mathcal{H}$. We conclude that the higher rank Racah algebra acts as a symmetry algebra for this Hamiltonian $\mathcal{H}$ from which we can derive the integrals of motion.

\begin{remark}\label{Re:AlgGen}
One can show that the $\binom{n}{2}$ operators $C_{ij}$ are linearly independent, but are not algebraically independent when $n>3$. However, the $2n-3$ operators in the set $G=\{C_{1,j}:j=2,\dots,n\}\cup\{C_{i,n}:i=2,\dots,n-1\}$ generate the symmetry algebra for the Hamiltonian $\mathcal{H}$, and every operator $C_{ij}$ can be written explicitly as a polynomial of the operators in $G$, see \cite{Iliev-2017,Iliev-2018}. Moreover, these constructions can be generalized for a discrete extension of the generic superintegrable system on the sphere related to the Hahn polynomials and the hypergeometric distribution \cite{Iliev&Xu-2017-2}. 
\end{remark}

\subsection{The Dunkl model} For a detailed exposition of the Dunkl model see \cite{DeBie&Genest$vandeVijver&Vinet}. 
The Dunkl model is obtained by realizing the algebra $\mathfrak{su}(1,1)$ using the Dunkl operators as defined by C.F. Dunkl in \cite{DTAMS}.
We consider the Dunkl operators related to the reflection group $\mathbb{Z}_2^n$. They are defined as follows:
\[
  T_{i}:=\partial_{x_i}+\mu_i\frac{1-R_{i}}{x_i}.
\]
The operator $R_{i}$ is the reflection which acts as $R_{i}(f(x_i))=f(-x_i)$. The number $\mu_i>0$ is a deformation parameter. The operators $T_i$ are commutative. With these operators one can realize the algebra $\mathfrak{su}(1,1)$:
\[
 J_{+,i}=\frac{x_i^2}{2}, \quad J_{-,i}=\frac{T_i^2}{2}, \quad J_{0,i}=\frac{1}{2}\left(x_i\partial_i+\mu_i+\frac{1}{2}\right).
\]
This realization of $\mathfrak{su}(1,1)$ leads to a new realization of the Racah algebra $\mathcal{R}_n$ with
\begin{align*}
C_i&=\frac{4\mu_i^2-4\mu_i R_i-3}{16}, \\
C_{ij}&=\frac{1}{4}\left( -(x_iT_j-x_jT_i)^2+(\mu_iR_j+\mu_jR_i)^2-1\right).
\end{align*}
Observe that the operator  $x_iT_j-x_jT_i$ is the Dunkl angular momentum operator. By Lemma \ref{sl2} this realization of the Racah algebra is in the centralizer of the algebra generated by the following elements:
\[
J_{+,[n]}=\frac{1}{2}\sum_{i=1}^n x_i^2, \quad J_{-,[n]}=\frac{1}{2}\sum_{i=1}^n T_i^2, \quad J_{0,[n]}=\frac{1}{2}\sum_{i=1}^n \left(x_i\partial_i+\mu_i+\frac{1}{2}\right)
\]
Observe that $J_{-,[n]}$ is the Dunkl-Laplacian $\Delta_{\text{Dunkl}}$ times $1/2$. The Racah algebra thus acts as a symmetry algebra for the Dunkl-Laplacian. Also observe that the Euler operator $\mathbb{E}_n=\sum_{i=1}^n x_i\partial_i$ appears in $J_{0,[n]}$.
 Let $\mathcal{P}_k$ be the set of homogeneous polynomials of degree $k$. These are eigenspaces of  $J_{0,[n]}$. Consider the space of Dunkl-harmonics $\mathcal{H}_k:=\mathcal{P}_k \cap \ker(\Delta_{\text{Dunkl}})$. These spaces will act as representations for the higher rank Racah algebra.

\subsection{The Barut-Girardello model}
For a detailed overview of the Barut-Girardello model we refer to the following article \cite{DeBie&Iliev&Vinet}. The Barut-Girardello model for the rank one Racah algebra was previously considered in \cite{Genest&Vinet&Zhedanov-2014}.
The previous models realize the Racah algebra $\mathcal{R}_n$ in $n$ variables. The Barut-Girardello model has the interesting property that it realizes the Racah algebra in a number of variables equal to the rank $n-2$ of said algebra. This is obtained as follows. Consider the following realization of $\mathfrak{su}(1,1)$:
\[
 J_+=x^2\partial_x+2\nu x ,\quad J_-=\partial_x, \quad J_0=x\partial_x+\nu.
\]
After introducing $n$ variables $x_1. \dots, x_n$, their partial derivatives $\partial_1, \dots \partial_n$  and $n$ parameters $\nu_1,\dots, \nu_n$ one constructs a realization of the Racah algebra $\mathcal{R}_n$. It is the centralizer of the following $\mathfrak{su}(1,1)$ algebra by Lemma \ref{sl2}.
\[
  J_{+,[n]}=\sum_{i=1}^n (x_i^2\partial_{x_i}+2\nu_i x_i) ,\quad J_{-,[n]}=\sum_{i=1}^n\partial_i, \quad J_{0,[n]}=\sum_{i=1}^n (x_i\partial_i+\nu_i).
\]
Let $\mathcal{H}_k(\mathbb{R}^n)$ be the kernel of $J_{-,[n]}$ in the space of homogenous polynomials defined on $\mathbb{R}^n$. It has the following basis:
\[
 \varphi_{j_1, \ldots, j_{n-2}}= (x_1 - x_2)^k u_1^{j_1} u_2^{j_2} \ldots u_{n-2}^{j_{n-2}}
\]
with
\[
u_j := \frac{x_{j+2} - x_{j+1}}{x_1 - x_2}, \qquad j \in \{ 1, \ldots, n-2 \}.
\]
When one gauges the Racah algebra as follows
\begin{equation}
\label{gauge}
\widetilde{C_B} = (x_1 - x_2)^{-k} C_B (x_1 - x_2)^{k},
\end{equation}
one obtains an algebra acting on polynomials of at most degree $k$ in the variables $u_1,\dots u_{n-2}$. Explicit calculation leads to the following realization:
\begin{theorem}
The space $\Pi_k^{n-2}$ of all polynomials of degree $k$ in $n-2$ variables carries a realization of the rank $n-2$ Racah algebra $\mathcal{R}_n$.
This realization is given explicitly by
\[
\widetilde{C_{i}}=\nu_i(\nu_i-1), \qquad i \in [n]
\]
and, for $i, j \in \{3, \ldots, n \}$,
\begin{align*}
\widetilde{C_{12}}&=- \left(k-1-\sum_{\ell=1}^{n-2} u_{\ell}\partial_{u_\ell} \right)  \left(-k-\partial_{u_1}+\sum_{\ell=1}^{n-2} u_{\ell}\partial_{u_\ell} \right) + 2 \nu_2 \left(k-\sum_{\ell=1}^{n-2} u_{\ell}\partial_{u_\ell} \right) \\
& \qquad - 2 \nu_1\left(-k-\partial_{u_1}+\sum_{\ell=1}^{n-2} u_{\ell}\partial_{u_\ell} \right) + (\nu_1+\nu_2)(\nu_1+\nu_2-1)\\
\widetilde{C_{1j}}&=- \left(1 - \sum_{\ell=1}^{j-2} u_{\ell} \right)^2 \left(k-1-\sum_{\ell=1}^{n-2} u_{\ell}\partial_{u_\ell} \right)   \left( \partial_{u_{j-2}}- \partial_{u_{j-1}} \right)\\
& \qquad + 2 \nu_j \left(1 - \sum_{\ell=1}^{j-2} u_{\ell} \right)\left(k-\sum_{\ell=1}^{n-2} u_{\ell}\partial_{u_\ell} \right) 
 - 2 \nu_1  \left(1 - \sum_{\ell=1}^{j-2} u_{\ell} \right)  \left( \partial_{u_{j-2}}- \partial_{u_{j-1}} \right)\\ & \qquad  + (\nu_1+\nu_j)(\nu_1+\nu_j-1)\\
\widetilde{C_{2j}}&= -\left( \sum_{\ell=1}^{j-2} u_{\ell} \right)^2 \left(1-k-\partial_{u_1}+\sum_{\ell=1}^{n-2} u_{\ell}\partial_{u_\ell} \right)   \left( \partial_{u_{j-2}}- \partial_{u_{j-1}} \right)\\
& \qquad + 2 \nu_j \left( \sum_{\ell=1}^{j-2} u_{\ell} \right) \left(k+\partial_{u_1}-\sum_{\ell=1}^{n-2} u_{\ell}\partial_{u_\ell} \right)  + 2 \nu_2 \left( \sum_{\ell=1}^{j-2} u_{\ell} \right)\left( \partial_{u_{j-2}}- \partial_{u_{j-1}} \right)   \\
& \qquad + (\nu_2+\nu_j)(\nu_2+\nu_j-1)\\
\widetilde{C_{ij}}&= - \left( \sum_{\ell=j-1}^{i-2} u_{\ell} \right)^2 \left( \partial_{u_{i-2}}- \partial_{u_{i-1}} \right) \left( \partial_{u_{j-2}}- \partial_{u_{j-1}} \right)\\
 & \qquad+ 2 \nu_j  \left( \sum_{\ell=j-1}^{i-2} u_{\ell} \right)  \left( \partial_{u_{i-2}}- \partial_{u_{i-1}} \right) - 2 \nu_i  \left( \sum_{\ell=j-1}^{i-2} u_{\ell} \right)  \left( \partial_{u_{j-2}}- \partial_{u_{j-1}} \right)\\
& \qquad + (\nu_i+\nu_j)(\nu_i+\nu_j-1)
\end{align*}
where we assume $i >j$ and with $u_{n-1}=0$ whenever it appears.
\end{theorem}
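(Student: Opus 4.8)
The plan is to separate the assertion that $\Pi_k^{n-2}$ \emph{carries} a realization from the task of producing the explicit formulas, because the former is essentially free. By Lemma \ref{sl2} each generator $C_B$ of the Barut--Girardello realization commutes with $\mathfrak{su}^{[n]}(1,1)$, in particular with $J_{-,[n]}=\sum_i\partial_i$ and with $J_{0,[n]}$; hence $C_B$ preserves every joint eigenspace of this pair, and in particular the space $\mathcal{H}_k(\mathbb{R}^n)$ spanned by the $\varphi_{j_1,\dots,j_{n-2}}$. The gauge map $f\mapsto (x_1-x_2)^{-k}f$ of \eqref{gauge} carries $\mathcal{H}_k(\mathbb{R}^n)$ isomorphically onto $\Pi_k^{n-2}$, sending $\varphi_{j_1,\dots,j_{n-2}}$ to the monomial $u_1^{j_1}\cdots u_{n-2}^{j_{n-2}}$. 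Since gauge conjugation is an algebra automorphism and restriction to an invariant subspace is an algebra homomorphism, the gauged operators automatically satisfy the relations of $\mathcal{R}_n$. Thus the only real task is to compute the $\widetilde{C_B}$ explicitly, and by Lemma \ref{LinearDependence} it suffices to treat $\widetilde{C_i}$ and $\widetilde{C_{ij}}$.

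First I would record a closed form for the ungauged generators. A short computation with $J_0=x\partial_x+\nu$, $J_\pm$ as given shows $C=\nu(\nu-1)$, so $\widetilde{C_i}=\nu_i(\nu_i-1)$ is a scalar and gauge-invariant. Expanding $C_{ij}=C_i+C_j+2J_{0,i}J_{0,j}-J_{+,i}J_{-,j}-J_{+,j}J_{-,i}$ and collecting the second-order terms via $(2x_ix_j-x_i^2-x_j^2)\partial_i\partial_j=-(x_i-x_j)^2\partial_i\partial_j$ yields the compact expression
\begin{equation*}
C_{ij}=-(x_i-x_j)^2\partial_i\partial_j+2\nu_j(x_i-x_j)\partial_i-2\nu_i(x_i-x_j)\partial_j+(\nu_i+\nu_j)(\nu_i+\nu_j-1),
\end{equation*}
which depends on $x_i,x_j$ only through their difference. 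This is the key simplification: it reduces the whole problem to understanding how the single difference $x_i-x_j$ and the derivatives $\partial_i,\partial_j$ behave under the change of variables.

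Next I would pass to the coordinates $s=x_1-x_2$, $u_1,\dots,u_{n-2}$ together with one translation coordinate $w=\sum_i x_i$; since $\sum_i\partial_{x_i}=n\,\partial_w$, functions in the kernel of $J_{-,[n]}$ are independent of $w$. The telescoping identities $\sum_{\ell=1}^{j-2}u_\ell=(x_j-x_2)/s$ and $\sum_{\ell=j-1}^{i-2}u_\ell=(x_i-x_j)/s$ express every difference $x_i-x_j$ as $s$ times a linear form in the $u_\ell$, which is exactly where the partial sums and the factor $1-\sum_{\ell=1}^{j-2}u_\ell=(x_1-x_j)/s$ in the statement come from. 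Writing $E=\sum_{\ell=1}^{n-2}u_\ell\partial_{u_\ell}$, the chain rule gives, for a function $h$ of the $u_\ell$ alone,
\begin{align*}
\partial_{x_1}(s^p h)&=s^{p-1}(p-E)h, & \partial_{x_2}(s^p h)&=s^{p-1}(-p+E-\partial_{u_1})h,\\
\partial_{x_m}(s^p h)&=s^{p-1}(\partial_{u_{m-2}}-\partial_{u_{m-1}})h & &(m\geq 3),
\end{align*}
with the convention $u_{n-1}=0$. Substituting $x_i-x_j$ and these rules into the displayed formula for $C_{ij}$, applying the result to $s^k g$ and dividing by $s^k$, produces the four stated operators according to whether $i,j$ lie in $\{1\}$, $\{2\}$ or $\{3,\dots,n\}$; for instance $i=1,j=2$ gives directly the stated $\widetilde{C_{12}}$.

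The main obstacle is the bookkeeping of the powers of $s$ under nested differentiations. Because $\partial_{x_1}$ and $\partial_{x_2}$ each lower the exponent $p$ by one, the value substituted for $p$ drops from $k$ to $k-1$ once a second derivative acts; this is precisely the mechanism that turns the naive factor $k-E$ into $k-1-E$ in the leading term of $\widetilde{C_{12}}$, $\widetilde{C_{1j}}$ and $\widetilde{C_{2j}}$, whereas the derivatives $\partial_{x_m}$ with $m\geq 3$ carry no such shift. Keeping the non-commuting factors in the correct order (the coefficient $(x_i-x_j)^2$ remaining to the left of the derivatives), tracking these shifts, and correctly telescoping the differences across all four index cases is the only delicate part; once the substitution rules above are in hand, each case is a direct, if careful, computation. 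A useful consistency check at every stage is that the resulting operators must preserve $\Pi_k^{n-2}$, which is guaranteed a priori by the invariance of $\mathcal{H}_k(\mathbb{R}^n)$ established in the first step.
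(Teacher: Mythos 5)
Your proposal is correct and takes essentially the same route as the paper (whose ``explicit calculation'' defers to \cite{DeBie&Iliev&Vinet}): realize each factor \`a la Barut--Girardello, reduce to $C_i$ and $C_{ij}$ via Lemma \ref{LinearDependence}, use the centralizer property of Lemma \ref{sl2} to restrict to $\mathcal{H}_k(\mathbb{R}^n)$, apply the gauge \eqref{gauge}, and compute in the variables $u_\ell$. Your compact form $C_{ij}=-(x_i-x_j)^2\partial_i\partial_j+2\nu_j(x_i-x_j)\partial_i-2\nu_i(x_i-x_j)\partial_j+(\nu_i+\nu_j)(\nu_i+\nu_j-1)$, the chain-rule substitution rules for $\partial_{x_1},\partial_{x_2},\partial_{x_m}$ acting on $s^p h$, and the $k\mapsto k-1$ shift when a second $s$-lowering derivative acts all check out against the four stated cases, including the operator orderings.
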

This model has only $n-2$ variables which is equal to the rank of the higher rank Racah algebra. It is possible to embed this algebra into a differential operator realization of the universal enveloping algebra of $\mathfrak{sl}_{n-1}$, see \cite{DeBie&vandeVijver$Vinet}.

\subsection{The discrete model} For a detailed overview see \cite{DeBie&vandeVijver,Iliev-2017}.
The discrete model has no known underlying realization of $\mathfrak{su}(1,1)$. Instead its action is derived from the action on any irreducible representation of $\mathcal{R}_n$ denoted by $V$.
Let $\mathcal{Y}_{\text{initial}}$ and $\mathcal{Y}_{\text{final}}$ be two labeling Abelian algebras (Definition \ref{label}). Consider two bases of $V$: $\{ \psi_{\vec k}\}$ diagonalizing  $\mathcal{Y}_{\text{initial}}$ and $\{ \varphi_{\vec k} \}$ diagonalizing $\mathcal{Y}_{\text{final}}$ as in Example \ref{MultRacah}. The connection coefficients between these two bases are given by the functions $R_{\vec k}$ defined as:
\[
\bra{\varphi_{\vec s}}\ket{\psi_{ \vec k}}=:R_{\vec s}(\vec k) .
\]
By Example \ref{MultRacah} these functions $R_{\vec k}$  are multivariate Racah polynomials. We define the action of a generator $C_A$ of $\mathcal{R}_n$ by
\[
C_AR_{\vec s}(\vec k)=C_A \bra{\varphi_{\vec s}}\ket{\psi_{ \vec k}}:=\bra{\varphi_{\vec s}}C_A\ket{\psi_{ \vec k}}.
\]
We want to describe the operators $C_A$ acting of the function $R_{\vec k}$. To do so we identify operators whose action on these function $R_{\vec k}$ coincides with the action of $C_A$. The multivariate Racah polynomials are eigenvectors of the labeling Abelian algebras $\mathcal{Y}_{\text{initial}}$ and $\mathscr{Y}_{\text{final}}$  in this realization. The multivariate Racah polynomials are also eigenvectors of the following Racah operators. These were originally introduced in \cite{ Geronimo&Iliev-2010}. See also \cite{Iliev-2017,Iliev-2018,Iliev&Xu-2017-2,Post}.
\begin{definition} Put
\[ 
\mathcal{L}_{j}=\sum_{\substack{\vec \nu \in \{ -1,0,1\}^{j}\\ \vec \nu\neq \vec 0}} G_{\vec \nu}( E_{\vec \nu}-1).
\]
Here $E_{\vec \nu}$ is a shift operator defined as follows. Let $E_{x_i}^{\nu_i}(f(x_j))=f(x_j+\delta_{ij}\nu_i)$. Then we define $E_{\vec \nu}=E_{x_1}^{\nu_1}E_{x_2}^{\nu_2}\ldots E_{x_{j}}^{\nu_{j}}$.
The $G_{\vec \nu}$ are rational functions in the variables $x_0, x_1, \ldots , x_{j+1}$ and $\beta_0, \ldots, \beta_{j+1}$ and are defined as follows. We introduce the following functions:
\begin{align*}
B_i^{0,0}&:=x_i(x_i+\beta_i)+x_{i+1}(x_{i+1}+\beta_{i+1})+\frac{(\beta_i+1)(\beta_{i+1}-1)}{2}, \\
B_i^{0,1}&:=(x_{i+1}+x_i+\beta_{i+1})(x_{i+1}-x_i+\beta_{i+1}-\beta_i), \\
B_i^{1,0}&:=(x_{i+1}-x_i)(x_{i+1}+x_{i}+\beta_{i+1}),\\
B_i^{1,1}&:=(x_{i+1}+x_i+\beta_{i+1})(x_{i+1}+x_i+\beta_{i+1}+1).
\end{align*}
Let $I_i f(x_i):=f(-x_i-\beta_i)$. We extend $B^{s,t}$ by defining:
\begin{align*}
B_i^{-1,t}&:=I_i(B_i^{1,t}), \\
B_i^{s,-1}&:=I_{i+1}(B_i^{s,1}), \\
B_i^{-1,-1}&:=I_i(I_{i+1}(B_i^{1,1})).
\end{align*}
We also introduce
\begin{align*}
b_i^0&:=(2x_i+\beta_i+1)(2x_i+\beta_i-1), \\
b_i^1&:=(2x_i+\beta_i+1)(2x_i+\beta_i), \\
b_i^{-1}&:=I_{i}(b_i^{1}).
\end{align*}
Let $|\vec \nu|_0$ be the number of zeroes appearing in $\vec \nu$. Then $G_{\vec \nu}$ is
\[
G_{\vec \nu}:=2^{|\vec \nu|_0}\frac{\prod_{i=0}^{j} B_i^{\nu_i,\nu_{i+1}}}{\prod_{i=1}^{j} b_i^{\nu_i}}.
\]
\end{definition}
The action of these Racah operators coincides up to scalar with the action of $\mathscr{Y}_{\text{final}}$. 
In general it can be shown that the action of any $C_{[i..j]}$ coincides with the action of minus a Racah operator up to the addition of a scalar. These operators generate the Racah algebra $\mathcal{R}_n$ as
\[
C_{ij}=C_{[i..j]}-C_{[i..j-1]}-C_{[i+1..j]}+C_{[i+1..j-1]}+C_i+C_j.
\]
To present the discrete model of the higher rank Racah algebra we need to introduce the following map:
\begin{definition}
 Let $\sigma$ be the map that adds $1$ to any index of an expression:
\begin{align*}
  \textup{Alg}[x_0,  \ldots, x_s;\beta_0, \ldots \beta_s; E_{1}, \ldots , E_{s}] &\rightarrow \textup{Alg}[x_1, \ldots, x_{s+1};\beta_1, \ldots \beta_{s+1}; E_{2}, \ldots , E_{s+1}] \\
 \sigma(x_i)&=x_{i+1}, \\
 \sigma(\beta_i)&=\beta_{i+1},\\
 \sigma(E_{x_i})&=E_{x_{i+1}} ,
 \end{align*}
e.g. $\sigma(x_1\beta^2_2E_{x_1})=x_2\beta^2_3E_{x_2}$.
\end{definition}

In \cite{DeBie&vandeVijver} the following theorem was proven.
\begin{theorem}\label{MainTheorem}
Define $\kappa(x,\beta)=\left(x+\frac{\beta+1}{2}\right)\left(x+\frac{\beta-1}{2}\right)$. With $\mathcal{L}_i$ given as above, define the following operators:
\begin{align}
C_{[m]}&=\kappa(x_{m-1},\beta_{m-1}), \label{OperatorOne} \\
C_{[2\ldots m+1]}&=-\mathcal{L}_{m-1}+\kappa(0,\beta_m-\beta_0-1), \label{OperatorTwo}\\
C_{[p\ldots q]}&=\sigma^{p-2}(C_{[2\ldots q-p+2]}) \label{OperatorThree}, \qquad \text{ if } \quad p>2
\end{align}
and set $x_0=0$.
The algebra generated by these operators is a discrete realization of $R(n)$.
\end{theorem}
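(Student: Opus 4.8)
The plan is to exploit the fact that the discrete model is, by its very construction, nothing but the action of the abstract generators of $\mathcal{R}_n$ on the representation $V$ of Example \ref{MultRacah}, read off in the basis of overlap coefficients. Concretely, I would fix the irreducible module $V$ together with the two bases $\{\psi_{\vec k}\}$ and $\{\varphi_{\vec s}\}$ whose overlaps $R_{\vec s}(\vec k)=\langle\varphi_{\vec s}|\psi_{\vec k}\rangle$ are the multivariate Racah polynomials, and recall that the prescription $C_A R_{\vec s}(\vec k):=\langle\varphi_{\vec s}|C_A|\psi_{\vec k}\rangle$ turns the space of functions of the discrete variable $\vec k$ into a module over $\mathcal{R}_n$. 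Because this is a genuine representation, the operators representing the generators automatically obey every commutation relation of the Racah algebra, so the entire content of the theorem reduces to identifying these abstract actions with the explicit difference operators \eqref{OperatorOne}--\eqref{OperatorThree}. Once equality is checked against every $R_{\vec s}$, which span the function space as $\vec s$ varies, it holds as an identity of operators and the relations are inherited for free.

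First I would treat the two chains separately. The generators $C_{[m]}$ belong to $\mathcal{Y}_{\text{initial}}$ and therefore diagonalise $\{\psi_{\vec k}\}$; by Lemma \ref{overlap} applied to the nested sub-problems the corresponding eigenvalue is $\kappa(x_{m-1},\beta_{m-1})$ (with $x_0=0$ recovering $C_1$), so $C_{[m]}$ acts as multiplication and \eqref{OperatorOne} follows. Dually, the generators $C_{[2\ldots m+1]}$ belong to $\mathcal{Y}_{\text{final}}$ and diagonalise $\{\varphi_{\vec s}\}$; written in the variable $\vec k$ their action becomes a difference operator, and by the difference equation satisfied by the multivariate Racah polynomials (from \cite{Geronimo&Iliev-2010}) it coincides, up to the eigenvalue shift $\kappa(0,\beta_m-\beta_0-1)$, with $-\mathcal{L}_{m-1}$. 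This is exactly the statement that the Racah operators reproduce the action of $\mathcal{Y}_{\text{final}}$ up to a scalar, and it yields \eqref{OperatorTwo}; the degenerate case $\mathcal{L}_0=0$ recovers the remaining singletons $C_i$.

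To obtain \eqref{OperatorThree} for the intermediate intervals $C_{[p\ldots q]}$ with $p>2$, I would invoke the embedding of Lemma \ref{embed} together with the self-similar structure of the multivariate Racah polynomials: relative to the sub-chain on the indices $p,\ldots,q$, the variable $x_{p-1}$ and the parameter $\beta_{p-2}$ play precisely the roles that $x_1$ and $\beta_0$ play in the base case. Applying the previous step to the correspondingly relabelled sequence of labeling Abelian algebras, and noting that this relabelling of variables, parameters and shift operators is implemented verbatim by $\sigma^{p-2}$, gives $C_{[p\ldots q]}=\sigma^{p-2}(C_{[2\ldots q-p+2]})$. With all interval Casimirs $C_{[i\ldots j]}$ now identified, Lemma \ref{LinearDependence} and the relation $C_{ij}=C_{[i..j]}-C_{[i..j-1]}-C_{[i+1..j]}+C_{[i+1..j-1]}+C_i+C_j$ show that these operators generate the image of $\mathcal{R}_n$, completing the proof.

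I expect the main obstacle to lie in this last step: although the $\sigma$-shift is combinatorially transparent, justifying that $\sigma^{p-2}$ exactly reproduces the action of $C_{[p\ldots q]}$ requires a careful bookkeeping argument showing that the rank-one result of Lemma \ref{overlap} may be applied to the appropriate nested eigenspace, with the index and parameter shifts aligning precisely with the definition of $\sigma$. The underlying analytic input---that the multivariate Racah polynomials are eigenfunctions of the $\mathcal{L}_j$ with the stated eigenvalues---I would take from \cite{Geronimo&Iliev-2010}, so the effort is concentrated in tracking the relabelling rather than in a fresh computation of the difference equation.
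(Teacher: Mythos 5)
Your proposal follows essentially the same route as the paper: the review does not reprove Theorem \ref{MainTheorem} but defers to \cite{DeBie&vandeVijver}, and its sketch --- defining the action of $C_A$ on the overlap coefficients $R_{\vec s}(\vec k)$, identifying each interval Casimir $C_{[i..j]}$ with minus a Racah operator plus a scalar via the bispectrality results of \cite{Geronimo&Iliev-2010}, handling $p>2$ by the index shift $\sigma^{p-2}$, and recovering all generators through Lemma \ref{LinearDependence} and the inclusion--exclusion identity for $C_{ij}$ --- is precisely your plan. The relabelling bookkeeping you flag as the main obstacle is indeed where the cited proof concentrates its effort, so your assessment of the difficulty is accurate as well.
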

This result coincides with the realization in rank one which was already known \cite{Genest&Vinet&Zhedanov-2014-2,Genest&Vinet&Zhedanov-2014-3} and the result obtained in \cite{Post} for the rank $2$ case. In \cite{Iliev-2017}, the representation of $R(n)$ in Theorem~\ref{MainTheorem} was constructed by defining in terms of the Racah operators the generators $\{C_{1,j}:j=2,\dots,n\}\cup\{C_{i,n}:i=2,\dots,n-1\}$ of the symmetry algebra discussed in Remark \ref{Re:AlgGen}.

\section{Further results and conclusions}\label{Conclusion}
The algebraic properties of the higher rank Racah algebra are not well understood yet. The representation theory for the rank one case is being build up in \cite{Hau-Wen&Bockting-Conrad-3,Hau-Wen&Bockting-Conrad,Hau-Wen&Bockting-Conrad-2,Bu&Hou&Gao,Huang}.  The relationship with other algebras is also being studied. The rank one Racah algebras have the Temperly-Lieb and Brauer algebras as quotients, see \cite{Crampe$Poulain&Vinet}. It would be interesting to see this result generalized to higher rank. Howe type dualities have been brought to light involving the higher rank Racah algebra, see \cite{Gaboriaud&Vinet&Vinet&Zhedanov}.
When one replaces $\mathfrak{su}(1,1)$ with different algebras in the method given in Section \ref{Rn}, one obtains algebras which are strongly related to the higher rank Racah algebra and for which we can solve the Racah problem. If one takes the Lie super algebra $\mathfrak{osp}(1|2)$, one obtains the higher rank Bannai-Ito algebra \cite{DeBie&Genest&Vinet-2016-2,DeBie&Genest$vandeVijver&Vinet-2}. The $q$-deformation $\mathfrak{osp}_q(1|2)$ leads to the higher rank $q$-deformed Bannai-Ito algebra \cite{DeBie&DeClercq&vandeVijver}. Lastly if one replaces $\mathfrak{su}(1,1)$ with the oscillator algebra or equivalently the Heisenberg algebra one obtains an algebra which contains the Lie algebra $\mathfrak{sl}_n$ \cite{Crampe&vandeVijver&Vinet}. This suggest that there must be a deep connection between the higher rank Racah algebra and the special linear Lie algebra $\mathfrak{sl}_n$.

\section{Acknowledgements}
PI is partially supported by the Simons Foundation Grant \#635462.
The work of HDB and WVDV is supported by the Research Foundation Flanders (FWO) under Grant EOS 308894451. 
The research of LV is supported in part by a Discovery Grant from the Natural Sciences and Engineering Research Council (NSERC) of Canada.
\bibliographystyle{amsplain}

\end{document}